\newcommand{\al}{\alpha}
\newcommand{\Om}{\Omega}
\newcommand{\del}{\delta}
\newcommand{\Del}{\Delta}
\newcommand{\ep}{\varepsilon}
\newcommand{\Lam}{\Lambda}
\newcommand{\Gam}{\Gamma}
\newcommand{\gam}{\gamma}
\newcommand{\ffi}{\varphi}
\newcommand{\sig}{\sigma}
\newcommand{\hac}{\mathcal{H}}
\newcommand{\hact}{\mathcal{H}_T}
\newcommand{\cinf}{\mathcal{C}^\infty}
\newcommand{\N}{\mathbb{N}}
\newcommand{\re}{\mathbb{R}}
\newcommand{\red}{\mathbb{R}^{d}}
\newcommand{\tf}{\mathcal{F}}
\newcommand{\cs}{\mathcal{S}}
\newcommand{\cl}{\mathcal{L}}
\newcommand{\R}{\mathbb{R}}
\newcommand{\D}{\mathbb{D}}
\newcommand{\E}{\mathbb{E}}
\renewcommand{\P}{\mathbb{P}}
\newcommand{\beq}{\begin{equation}}
\newcommand{\eeq}{\end{equation}}
\newtheorem{prop}{Proposition}[section]
\newtheorem{thm}[prop]{Theorem}
\newtheorem{lemma}[prop]{Lemma}
\theoremstyle{definition}
\newtheorem{hyp}[prop]{Hypothesis}
\theoremstyle{remark}
\newtheorem{example}[prop]{Example}
\numberwithin{equation}{section}
\begin{document}

\title[Density estimates]{Gaussian upper density estimates for spatially homogeneous SPDEs}

\author{Llu\'is Quer-Sardanyons}
\address{Departament de Matem\`atiques,
  Universitat Aut\`onoma de Barcelona, 08193 Bellaterra (Barcelona),
  Spain}
\email{quer@mat.uab.cat}
\urladdr{http://www.mat.uab.cat/~quer}

\begin{abstract}
We consider a general class of SPDEs in $\mathbb{R}^d$ driven by a Gaussian spatially homogeneous noise which is white in time. 
We provide sufficient conditions on the coefficients and the spectral measure associated to the noise ensuring that 
the density of the corresponding mild solution admits an upper estimate of Gaussian type.
The proof is based on the formula for the density arising from the integration-by-parts formula of the Malliavin calculus. 
Our result applies to the stochastic heat equation with any space dimension and the stochastic wave equation with
$d\in \{1,2,3\}$. In these particular cases, the condition on the spectral measure turns out to be optimal. 
\end{abstract}

\subjclass[2000]{60H07, 60H15}

\keywords{Stochastic partial differential equation, spatially homogeneous Gaussian noise, Malliavin calculus.}

\thanks{Supported by the grant MICINN-FEDER Ref. MTM2009-08869}

\date{\today}

\maketitle

%%%%%%%%%%%%%%%%%%%%%%%%%%%%%%%%%%%%%%%%%%%%%%%%%%%%%%%%%%%%%%%%%%%%%%
\section{Introduction}

We are interested in establishing Gaussian type upper estimates for the density of the {\emph{mild}} solution of the following class of SPDEs:
\beq
\cl u(t,x) =  b(u(t,x)) + \sigma(u(t,x)) \dot{W} (t,x), \qquad (t,x)\in [0,T]\times \red,
\label{eq:21}
\eeq
where $T>0$ is some fixed time horizon and $\cl$ denotes a general
second order partial differential operator with constant coefficients, with appropriate initial
conditions. The coefficients $\sigma$ and $b$ are real-valued
functions and $\dot{W} (t,x)$ is the formal notation for a
Gaussian random perturbation which is white in time and has some spatially homogeneous correlation 
(see Section \ref{sec:noise} for a precise definition of this noise). 
The typical examples of operator $\cl$ to which our result applies are the heat operator 
for any spatial dimension $d\geq 1$ and the wave operator with $d\in\{1,2,3\}$. 

If $\cl$ is first order in time, such as the heat operator
$\cl=\frac{\partial}{\partial t}-\Delta$, where $\Delta$ denotes the Laplacian operator on $\mathbb{R}^d$, then we impose initial
conditions of the form 
\beq 
u(0,x)=u_0(x) \qquad x\in \red,
\label{2.1.1} 
\eeq 
for some Borel function $u_0:\red \rightarrow \re$. If $\cl$ is second order in time, such as the wave operator
$\cl=\frac{\partial^2}{\partial t^2}-\Delta$, then we have to impose two initial conditions: 
\beq 
u(0,x)=u_0(x), \quad \frac{\partial u}{\partial t}(0,x)=v_0(x),\qquad x\in \red, 
\label{2.1.2} 
\eeq 
for some Borel functions $u_0,v_0:\red \rightarrow \re$.

The above class of SPDEs has been widely studied in the last two decades. 
Precisely, results on existence and uniqueness of solution in such a general setting have been established in
\cite{Peszat-Zabczyk-SPA1997,Dalang-EJP1999,Dalang-MuellerEJP2003,Dalang-Quer}, while the particular cases of heat and wave equations
have been studied using several frameworks in 
\cite{Walsh-LNM,Carmona-NualartPTRF1988,Dalang-FrangosAP1998,Millet-Sanz-AP1999,Karczewska-Zabczyk2000,Peszat-Zabczyk-PTRF2000,Peszat-JEE2002,Conus-Dalang}.

A fruitful line of research developed in some of the above-cited references has been to apply techniques of Mallliavin calculus 
in order to deduce some interesting properties of the probability law of the solution at any $(t,x)\in (0,T]\times \red$.
In fact, there is a whole bunch of results on existence and smoothness of the density for the stochastic heat and wave 
equations, for which we refer to 
\cite{Carmona-NualartPTRF1988,Pardoux-Zhang-JFA93,Bally-PardouxPOTA1998,Millet-Sanz-AP1999,Marquez-Mellouk-SarraSPA2001,Quer-Sanz-JFA,Quer-Sanz-Bernoulli,Sanz-book,Marinelli-Nualart-Quer}.
Moreover, in the paper \cite{Nualart-QuerPOTA}, existence and smoothness of density for the class of SPDEs (\ref{eq:21}) have been analyzed, 
unifying and improving some of the results cited so far. It is also worth mentioning that other kind of methods beyond Malliavin calculus 
can be used to prove the absolute continuity of the law of the solution in some particular SPDEs (see e.g. \cite{Fournier}).

Once the existence (and possibly smoothness) of the density of the solution to (\ref{eq:21}) is established, one usually gathers at some {\emph{nice}} 
estimates for this density, 
such as lower and upper Gaussian type bounds. Exploiting again techniques of Malliavin calculus, this problem has been recently addressed
by several authors. Precisely, as far as SPDEs with additive noise is concerned, using an explicit formula for the density proved in \cite{Nourdin-Viens}, 
the main result in \cite{Nualart-Quer-IDAQP} 
says the following (see Thm. 7 therein, and also \cite{Nualart-Quer-SPA} for related results). Let $\mu$ be spectral measure associated with the spatial correlation of $\dot W$, 
and consider the following assumption, which is necessary and sufficient for the existence and uniqueness of mild solution to (\ref{eq:21}) (see e.g. \cite{Dalang-EJP1999}).

\begin{hyp}\label{hyp:mu}
 Let $\Gam$ be the fundamental solution associated to the operator $\cl$. For all $t>0$, $\Gamma(t)$ defines a non-negative distribution with rapid decrease 
such that 
\[
\Phi(T):= \int_0^T\!\!\int_{\red} |\tf \Gam(t)(\xi)|^2 \, \mu(d\xi)dt < +\infty.
\]
Moreover, $\Gamma$ is a non-negative measure of the form $\Gamma(t,dx)dt$ such that, for all $T>0$,
\[
\sup_{0\leq t\leq T} \Gamma(t,\mathbb{R}^{d})< + \infty. 
\]
\end{hyp}

\bigskip

Then, under the above hypothesis, with vanishing initial data, $\sig\equiv 1$ and assuming that $b\in \mathcal{C}^1$ has a bounded derivative, 
\cite[Thm. 7]{Nualart-Quer-IDAQP} states that, for small enough $t$ and any $x\in \red$, the density $p_{t,x}$ of $u(t,x)$ satisfies,
for almost every $z\in \mathbb{R}$,
\beq
\frac{E\left| u(t,x)-m\right|}{ C_2 \Phi(t)} \exp\left\{ -\frac{(z-m)^2}{C_1 \Phi(t) }
\right\}\leq p_{t,x}(z) \leq \frac{E\left| u(t,x)-m\right|}{ C_1 \Phi(t)} \exp\left\{ -\frac{(z-m)^2}{C_2 \Phi(t)}\right\},
\label{eq:10}
\eeq
where $m=E(u(t,x))$, for some positive constants $C_1, C_2$. Note that the term $\Phi(t)$ is precisely the variance of the 
stochastic convolution in the mild form of (\ref{eq:21}) when $\sig\equiv 1$. As a consequence, this result applies to the stochastic
heat equation for any $d\geq 1$ and the stochastic wave equation in the case $d\in \{1,2,3\}$ provided that (see \cite[Sec. 3]{Dalang-EJP1999})
\beq
 \int_{\red}\frac{1}{1+|\xi|^2}\,\mu(d\xi) < +\infty.
\label{eq:101}
\eeq

On the other hand, in the multiplicative noise setting, such kind of density estimates, particularly the lower one, become more difficult to obtain
and Nourdin-Viens' density formula cannot be applied.
This has been already illustrated by Kohatsu-Higa in \cite{Kohatsu-PTRF2003} where, by means of Malliavin calculus techniques, 
a new method to obtain Gaussian lower bounds for 
general functionals of the Wiener sheet has been obtained. In the same paper, the author has applied this result to a 
stochastic heat equation in $[0,1]$ and driven by the space-time white noise.    
 
In order to deal with SPDEs beyond the one-dimensional setting, in \cite{Eulalia-Quer-SPA} Kohatsu-Higa's general method 
has been extended to the Gaussian space associated to the underlying spatially homogeneous noise $\dot W$. This allowed us 
to end up with the following density estimates for the stochastic heat equation in any space dimension $d\geq 1$. 
Assume that $b,\sig\in \cinf$ are bounded together with all their derivatives, $|\sig(z)|\geq c>0$ for all $z\in \re$, 
and for some $\eta\in (0,1)$, it holds
\beq
 \int_{\red}\frac{1}{(1+|\xi|^2)^\eta}\,\mu(d\xi) < +\infty.
\label{eq:100}
\eeq
Then, for all $(t,x)\in (0,T]\times \red$, the density $p_{t,x}$ of $u(t,x)$ verifies, for all $z\in \re$,
\begin{align}
& C_1 \Phi(t)^{-1/2} \exp \left\{ -\frac{|z-I_0(t,x)|^2}{C_2 \Phi(t)} \right\} \leq p_{t,x}(z) \nonumber \\
& \qquad \qquad \leq c_1 \Phi(t)^{-1/2} \exp \left\{ -\frac{(\vert z-I_0(t,x)\vert-c_3 T)^2}{c_2 \Phi(t)}\right\},
\label{eq:9}
\end{align}
where $I_0(t,x)=(\Gam(t) \ast u_0)(x)$, $u_0$ being the initial data. In the case $b\equiv 0$, the constant $c_3$ would vanish.
Note that here, in comparison to (\ref{eq:10}), the estimates are valid for any $T>0$.

In fact, let us point out that the upper bound in (\ref{eq:9}) is much easier to obtain than the lower one, and 
the former comes from the expression for the density popping up from the integration-by-parts formula in the Malliavin calculus framework.

Extending the lower estimate in (\ref{eq:9}) to the general class of SPDEs (\ref{eq:21}) seems to be an open problem, 
for the success in the application of the general strategy of \cite{Eulalia-Quer-SPA} is closely tied to 
the parabolic structure of the heat equation. However, a much more humble objective, which is the one we plan to gather in the 
present paper, is to tackle the upper bound. In fact, we are going to seek the minimal conditions on either  
the coefficients $b$ and $\sig$ and the spectral measure $\mu$ implying that the upper estimate in (\ref{eq:9}) remains valid for the general class of SPDEs (\ref{eq:21}). 
In particular, we will only need $b$ and $\sig$ to be of class $\mathcal{C}^2$ (and bounded with bounded derivatives) and, 
for the particular case of the heat (resp. wave) equation with any $d\geq 1$ (resp. $d\in \{1,2,3\}$), the condition on $\mu$ 
will be simply (\ref{eq:101}) rather that (\ref{eq:100}). More precisely, the main result of the paper is the following.
We use the notation $I_0(t,x)$ to denote the contribution of the initial data (see (\ref{2.3}) and (\ref{2.4}) for its explicit expression in the case of heat and
wave equations) and suppose that the forthcoming Hypothesis \ref{hyp:ic} is satisfied.

\begin{thm}\label{thm:main}
 Assume that Hypothesis \ref{hyp:mu} is satisfied, and that $b, \sig \in \mathcal{C}^2$ are bounded, have bounded derivatives, 
and $|\sig(z)|\geq c>0$ for all $z\in \re$. Moreover, suppose that, for some $\gam>0$, it holds 
\beq
 C\, \tau^\gam \leq \Phi(\tau)=\int_0^\tau \!\!\int_{\red} |\tf \Gam(s)(\xi)|^2 \, \mu(d\xi)ds, \qquad \tau\in (0,1].
\label{eq:102}
\eeq
Then, for any $(t,x)\in (0,T]\times \red$, the solution $u(t,x)$ of (\ref{eq:21}) has a density $p_{t,x}$ which is a continuous function and satisfies,
for all $z\in \re$,
\beq
 p_{t,x}(z) \leq c_1 \Phi(t)^{-1/2} \exp \left\{ -\frac{(\vert z-I_0(t,x)\vert-c_3 T)^2}{c_2 \Phi(t)}\right\},
\label{eq:gb}
\eeq
where the constant $c_3$ vanishes whenever $b$ does.
\end{thm}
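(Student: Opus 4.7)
Set $F=u(t,x)$. Under the stated regularity of $b$ and $\sig$, the results of \cite{Nualart-QuerPOTA} ensure $F\in \D^{2,p}$ for every $p\geq 1$ together with the nondegeneracy of the Malliavin matrix, so the classical density formula based on integration by parts (see e.g.\ Nualart's book) gives
\[
p_{t,x}(z)=E\bigl[\mathbf{1}_{\{F>z\}}\,\delta(V)\bigr],\qquad V:=\frac{DF}{\|DF\|_{\hact}^{2}},
\]
and the analogous identity on $\{F<z\}$. H\"older's inequality yields
\[
p_{t,x}(z)\le \|\delta(V)\|_{L^{p}(\Om)}\,\bigl[\P\bigl(\pm(F-I_0(t,x))>\pm(z-I_0(t,x))\bigr)\bigr]^{1/q},
\]
with $1/p+1/q=1$, so the task splits into (i) bounding $\|\delta(V)\|_{L^{p}(\Om)}$ by a multiple of $\Phi(t)^{-1/2}$, and (ii) deriving a Gaussian tail for $F-I_0(t,x)$ with variance proxy of order $\Phi(t)$.

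For (i), Meyer's inequality reduces $\delta(V)$ to $L^{p}$-norms of $\|DF\|_{\hact}^{-1}$ and of $\|D^{2}F\|_{\hact^{\otimes 2}}\|DF\|_{\hact}^{-k}$ for $k=2,3$. Positive-moment estimates $E[\|D^{j}F\|_{\hact^{\otimes j}}^{p}]\le C\,\Phi(t)^{jp/2}$ follow from BDG-type bounds for the spatially homogeneous noise together with Gr\"onwall, using that $\Phi(t)$ is the variance of the free stochastic convolution. The delicate ingredient is the negative moment estimate
\[
E\bigl[\|DF\|_{\hact}^{-2p}\bigr]\le C_{p}\,\Phi(t)^{-p},
\]
which I would establish by localising the $\hact$-norm to a sub-interval $[t-\ep,t]$: on such an interval the Malliavin derivative reads $D_{r,y}u(t,x)=\sig(u(r,y))\Gam(t-r,x-y)+R_{r,y}$ with a remainder $R$ of strictly higher order, so $|\sig|\ge c>0$ yields a pointwise lower bound of the form $\|Du(t,x)\|_{\hact}^{2}\ge c^{2}(\Phi(t)-\Phi(t-\ep))-\|R\|_{\hact}^{2}$. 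Choosing $\ep$ small enough that the first term dominates, with $\ep$ calibrated through the polynomial bound (\ref{eq:102}), and then applying a Chebyshev-type argument on $\P(\|Du(t,x)\|_{\hact}^{2}<\lam\Phi(t))$ for $\lam$ small, yields the required bound; this is the unique place where the hypothesis (\ref{eq:102}) is genuinely used.

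For (ii), decompose $F-I_0(t,x)$ into the drift $\int_0^t\!\!\int_{\red}b(u(s,y))\,\Gam(t-s,x-dy)\,ds$, which is deterministically bounded by $c_{3}T=T\|b\|_{\infty}\sup_{s\le T}\Gam(s,\red)$ by the second half of Hypothesis~\ref{hyp:mu}, and the stochastic convolution $M_{t,x}$. The process $r\mapsto \int_0^{r}\!\!\int_{\red}\Gam(t-s,x-dy)\sig(u(s,y))\,W(ds,dy)$ is a continuous martingale on $[0,t]$ whose quadratic variation at $r=t$ is bounded by $\|\sig\|_{\infty}^{2}\Phi(t)$ almost surely, so the exponential martingale inequality gives $\P(|M_{t,x}|>R)\le 2\exp(-R^{2}/(c_{2}\Phi(t)))$. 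Inserting this with $R=\max(|z-I_0(t,x)|-c_{3}T,0)$ into the H\"older bound (taking $p=q=2$, say) and using (i) produces (\ref{eq:gb}); continuity of $p_{t,x}$ is a by-product of the $L^{p}$-integrability of $V$ and $DV$ established along the way.

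\textbf{Main obstacle.} The crux is the negative moment bound on $\|DF\|_{\hact}$ with the sharp $\Phi(t)^{-1}$ scaling, since any loss there would degrade the exponent in (\ref{eq:gb}) and kill its Gaussian character. In the heat-equation case of \cite{Eulalia-Quer-SPA} this was handled via a parabolic rescaling, but the general operator $\cl$ admits no such scaling; the polynomial hypothesis (\ref{eq:102}) together with a purely small-time expansion of $D_{r,y}u(t,x)$ must take its place, and the argument must be carried out without invoking any structural feature of $\Gam$ beyond Hypothesis~\ref{hyp:mu}.
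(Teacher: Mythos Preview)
Your plan is essentially the paper's own proof: the same density formula from integration by parts, the same H\"older splitting into a probability factor and a Skorohod-divergence factor, the exponential martingale inequality applied to the martingale part after peeling off the drift by $c_3T$, and the same localisation-to-$[t-\ep,t]$ strategy for the negative moments of $\|Du(t,x)\|_{\hact}$, with (\ref{eq:102}) entering exactly where you say it does. The paper packages step (i) into two auxiliary results---a local positive-moment bound $E\|D^j u(\tau,y)\|_{\mathcal{H}^{\otimes j}_{t-\del,t}}^{2p}\le C\Phi(\del)^{jp}$ (Lemma~\ref{lemma:0}) and the negative-moment estimate (Proposition~\ref{prop:matrix})---but the substance is the same as your sketch.

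The one technical point you skate over is the positive-moment bound itself. You write that it ``follows from BDG-type bounds together with Gr\"onwall'', but in the generality of Hypothesis~\ref{hyp:mu} the kernel $\Gam(t)$ need not be a function (e.g.\ the $d=3$ wave equation, where it is surface measure on a sphere), so pointwise manipulations of $D_{r,y}u(t,x)$ are not available and a direct Gr\"onwall on the SPDE for $D^j u$ has to be justified. The paper handles this by mollifying $\Gam(t)$ to $\Gam_n(t)=\psi_n\ast\Gam(t)\in\mathcal{S}(\red)$, proving the estimate for the regularised solution $u_n$ (where your heuristic is literally correct), and then passing to the limit using the $L^2(\Om;\mathcal{H}^{\otimes j})$-convergence $D^j u_n\to D^j u$ from \cite{Quer-Sanz-Bernoulli,Nualart-QuerPOTA}. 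This is not a gap in your argument so much as a missing sentence, but it is precisely the device that lets the heat-equation proof of \cite{Eulalia-Quer-SPA} extend to the general operator $\cl$.
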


We remark that, though the above bound does not look exactly Gaussian, it does in an 
{\emph{asymptotic}} point of view, namely whenever $T$ is small or $z$ is large. 
On the other hand, we note that, under (\ref{eq:101}), condition (\ref{eq:102}) is satisfied for the heat and wave equations with $\gam=1$ and $\gam=3$, respectively 
(see e.g. \cite[Lem. 3.1]{Marquez-Mellouk-SarraSPA2001} and \cite[App. A]{Quer-Sanz-JFA}). Similarly, one can also check that the above theorem 
applies to the stochastic damped wave equation with any space dimension (see Example 7 in \cite[Sec. 3]{Dalang-EJP1999}), where condition (\ref{eq:102}) is fulfilled with $\gam=3$. 

We also point out that our result is not applicable to the case $\sig(z)=z$ (this would be related, e.g., to the parabolic Anderson problem \cite{Carmona-Molchanov}). 
In fact, in such a case there are even very few results on absolute continuity of the law of solutions to SPDEs (see \cite{Pardoux-Zhang-JFA93}). 
Nevertheless, in the recent paper \cite{Hu-Nualart-Song}, the authors prove existence and smoothness of the density for a stochastic heat equation with 
a nonlinear multiplicative noise which is white in time and with some spatial correlation (much more regular than the one considered in the 
present paper), and with a non-degeneracy condition on the diffusion coefficient of the form $\sigma(u_0(x_0))\neq 0$ for some $x\in \red$.
Their proof is based on a Feynman-Kac formula for the solution of the underlying equation. This technique has also been applied in  
\cite{Hu-Nualart-Song-AP2011} to study the density for a stochastic heat equation with a linear multiplicative fractional Brownian sheet.

As mentioned before, the proof of Theorem \ref{thm:main} will be based on the expression for the density arising from 
the application of the integration-by-parts formula (see \cite[Prop. 2.1.1]{Nualart-llibre}). We point out that this is a well-known method
that has been used in other contexts (see e.g. \cite{Guerin-Meleard-Eulalia,Dalang-Khosh-Nualart-PTRF2009}).   
As far as the technical obstacles is concerned, the main two ingredients needed in the proof of Theorem \ref{thm:main} are the following:

\begin{itemize}
 \item[(i)] A suitable estimate, in terms of $\Phi(t)$, of the norm of the iterated Malliavin derivative in a small time interval (see Lemma \ref{lemma:0}
for details). This will be a consequence
of a kind of analogous result for the case of the stochastic heat equation (see \cite[Lem. 3.4]{Eulalia-Quer-SPA}) and a mollifying 
procedure thanks to an approximation of the identity which will let us smooth the fundamental solution $\Gam(t)$. 
\item[(ii)] A precise control of the negative moments of the norm of the Malliavin derivative of the solution, again in terms of $\Phi(t)$ (see Proposition 
\ref{prop:matrix}). For this, 
we will adapt the proof of \cite[Thm. 6.2]{Nualart-QuerPOTA} to our setting, where the latter allowed the authors of that paper to establish 
that the underlying density is a smooth function (under much more regularity on the coefficients though).
\end{itemize}

The content of the paper is organized as follows. In Section \ref{sec:prel}, we rigorously describe the Gaussian spatially homogeneous noise 
$\dot W$ considered in equation (\ref{eq:21}), we introduce the corresponding Gaussian setting associated to it, together with the main notations of 
the Malliavin calculus machinery. Section \ref{sec:spde} will be devoted to recall the definition of mild solution to our SPDE (\ref{eq:21}) and 
summarize the main results on existence and uniqueness of solution, Malliavin differentiability and existence and smoothness of the density. 
The steps (i) and (ii) detailed above will be tackled in Section \ref{sec:aux}. Finally, we will prove Theorem \ref{thm:main} in Section \ref{sec:upper-bound}.    

\medskip

With a slight (but harmless) abuse of notation, as already done in this Introduction, the notation $|\cdot|$ shall 
denote either the modulus and norm in $\red$. 
Unless otherwise stated, any constant $c$ or $C$ appearing in our computations below
is understood as a generic constant which might change from line to line without further mention.

%%%%%%%%%%%%%%%%%%%%%%%%%%%%%%%%%%%%%%%%%%%%%%%%%%%%%%%%%%%%%%%%%%%%%%
\section{Preliminaries}
\label{sec:prel}

%%%%%%%%%%%%%%%%%%%%%%%%%%%%%%%%%%%%%%%%%%%%%%%%%%%%%%%%%%%%%%%%%%%%%%
\subsection{Spatially homogeneous noise}
\label{sec:noise}

Let us explicitly describe here our spatially homogeneous noise (see e.g. \cite{Dalang-EJP1999}). Precisely, on a complete probability space $(\Omega, \mathcal{F}, \P)$,
this is given by a family $W=\{W(\varphi), \varphi \in \mathcal{C}^{\infty}_0(\re_+ \times \R^{d})\}$ of zero mean Gaussian 
random variables, where $\mathcal{C}_0^\infty(\re_+ \times \R^{d})$ denotes the space of smooth functions with compact support, with the 
following covariance structure:
\begin{equation}
\E \big( W(\varphi) W(\psi) \big) = \int_0^\infty \!\! \int_{\R^d} \left( \varphi(t,\star)*\tilde \psi(t,\star)\right)(x) \, \Lam(dx) dt.
%\quad \varphi, \psi\in \cinf_0(\re_+\times \red)
\label{eq:00}
\end{equation}
In this expression, $\Lam$ denotes a non-negative and non-negative definite tempered measure on $\R^d$, 
$*$ stands for the convolution product, the symbol $\star$ denotes the spatial variable and $\tilde \psi(t,x):=\psi(t,-x)$.

In the above setting, a well-known result of harmonic analysis (see \cite[Chap. VII, Th\'eor\`eme XVII]{Schwartz}) 
implies that $\Lam$ has to be the Fourier transform of a non-negative tempered measure $\mu$ on $\R^d$, 
where the latter is usually called the {\emph{spectral measure}} of the noise $W$. We recall that, 
in particular, for some integer $m \geq 1$ it holds
\[
\int_{\R^d} \frac{1}{(1+|\xi|^2)^m} \, \mu(d\xi) < +\infty
\] 
and, by definition of the Fourier transform in the space $\cs'(\red)$ of tempered distributions, 
$\Lam =\tf \mu$ means that, for all $\phi$ belonging to the space $\cs(\red)$ 
of rapidly decreasing $\cinf$ functions,
\[
\int_{\red} \phi(x)\Lam(dx)=\int_{\red} \tf \phi(\xi) \mu(d\xi).
\]
Therefore, we have 
%\[
%\E \left[ W(\varphi) W(\psi) \right]= \int_0^\infty \int_{\R^d}
%\mathcal{F} \varphi(t)(\xi) \overline{\mathcal{F} \psi(t) (\xi)} \mu(d\xi) dt.
%\]
\[
\E \big( W(\varphi)^2 \big) = \int_0^\infty \!\! \int_{\R^d} |\mathcal{F} \varphi(t)(\xi)|^2 \mu(d\xi) dt.
\]

A typical example of space correlation is given by $\Lam(dx)=f(x)dx$, where $f$ is a non-negative function which is 
assumed to be integrable around the origin. In this case, the covariance functional (\ref{eq:00}) reads
\[
\int_0^\infty \!\! \int_{\red} \!\! \int_{\red} \ffi(t,x) f(x-y) \psi(t,y) \, dy dx dt.
\]
The space-time white noise would correspond to the case where $f$ is the Dirac delta at the origin.

%%%%%%%%%%%%%%%%%%%%%%%%%%%%%%%%%%%%%%%%%%%%%%%%%%%%%%%%%%%%%%%%%%%%%%
\subsection{Gaussian setting and Malliavin calculus}
\label{sec:malliavin}

We are going to describe the Gaussian framework which can be naturally associated to our noise $W$ 
and introduce the notations involved in the Malliavin calculus techniques.  

To start with, let us denote by $\mathcal{H}$ be the completion of the Schwartz space $\mathcal{S}(\R^{d})$ endowed with the semi-inner product
\[
\langle \phi_1, \phi_2 \rangle_{\mathcal{H}}:= \int_{\R^d}  (\phi_1 * \tilde{\phi_2})(x) \, \Lam(dx) =
\int_{\R^d} \mathcal{F} \phi_1(\xi) \overline{\mathcal{F} \phi_2 (\xi)} \, \mu(d\xi), \quad \phi_1, \phi_2 \in \mathcal{S}(\R^{d}).
\]
As proved in \cite[Example 6]{Dalang-EJP1999}, we remind that the Hilbert space $\mathcal{H}$ may contain distributions. 

Let $T>0$ be a fixed real number and define  $\mathcal{H}_T:=L^2([0,T]; \mathcal{H})$.
Using an approximation argument, our noise $W$ can be extended to a family of mean zero Gaussian 
random variables indexed by $\mathcal{H}_T$ (see e.g. \cite[Lemma 2.4]{Dalang-Quer}). With an innocuous abuse of notation, this family will be still denoted by 
$W=\{W(g), g \in \mathcal{H}_T\}$. Moreover, it holds
$\E\big( W(g_1)W(g_2) \big)=\langle g_1,g_2 \rangle_{\mathcal{H}_T}$, 
for all $g_1,g_2 \in \mathcal{H}_T$. Thus, this family defines an {\emph{isonormal Gaussian process}} on the Hilbert space $\hact$ and
we shall use the differential 
Malliavin calculus based on it (see e.g. \cite{Nualart-llibre,Sanz-book}). 

As usual, we denote the Malliavin derivative operator by
$D$. Recall that it is a closed and unbounded operator defined in $L^2(\Om)$ and taking values in  $L^2(\Omega;\mathcal{H}_T)$, 
whose domain is denoted by $\mathbb{D}^{1,2}$. More general, for any integer $m \geq 1$ and any $p \geq 2$, the domain of the iterated
Malliavin derivative $D^m$ in $L^p(\Om)$ will be denoted by $\mathbb{D}^{m,p}$,  
where we remind that $D^m$ takes values in $L^p(\Omega; \mathcal{H}_T^{\otimes m})$. We also set
$\mathbb{D}^{\infty}= \cap_{p \geq 1} \cap_{m \in \N}
\mathbb{D}^{m,p}$. The space $\mathbb{D}^{m,p}$ can also be seen as the completion of the set of {\emph{smooth functionals}} 
with respect to the semi-norm
\[
\|F\|_{m,p}:=\Bigg\{\E \left( \vert F \vert^p\right)+\sum_{j=1}^m \E\Big( \Vert D^j F \Vert^p_{\hact^{\otimes j}}\Big)\Bigg\}^{\frac 1p}.
\]
For any differentiable random
variable $F$ and any $r=(r_1,...,r_m) \in [0,T]^m$, $D^m F(r)$ is an
element of $\mathcal{H}^{\otimes m}$ which will be denoted by
$D^m_r F$.

%We define the Malliavin matrix of a $k$-dimensional random vector $F \in (\mathbb{D}^{1,2})^{k}$ by $\gamma_F=(\langle DF_i, D F_j \rangle_{\mathcal{H}_T})_{1\leq i,j\leq k}$.

A random variable $F$ is said to be {\emph{smooth} if it belongs to $\D^{\infty}$, 
and a smooth random variable $F$ is said to be {\emph{non-degenerate}} if
$\|DF\|_{\hact}^{-1} \in \cap_{p \geq 1} L^p(\Omega)$.
Owing to \cite[Theorem 2.1.4]{Nualart-llibre}, we know that a non-degenerate random variable has a $\mathcal{C}^{\infty}$
density.

For any $t\in [0,T]$, let $\mathcal{F}_t$ be the $\sigma$-field generated
by the random variables $\{W_s(h), h \in \mathcal{H}, 0 \leq s
\leq t\}$ and the $\P$-null sets, where $W_t(h):=W(1_{[0,t]}h)$.
% We define the {\emph{predictable}} $\sigma$-field as the $\sigma$-field in $\Omega \times [0,T]$ generated by the sets $\{ (s,t] \times A, 0\leq s<t \leq T, A \in \mathcal{F}_s \}$.

%%%%%%%%%%%%%%%%%%%%%%%%%%%%%%%%%%%%%%%%%%%%%%%%%%%%%%%
\section{Spatially homogeneous SPDEs}
\label{sec:spde}

We gather here a general result on existence and uniqueness of mild solution for our SPDE (\ref{eq:21}) and 
the main results on Malliavin calculus applied to it, namely Malliavin differentiability and existence and smoothness of density.
As usual, we will also focus on the main examples of application that we have in mind, which are the stochastic heat and wave equations with 
$d\geq 1$ and $d\in \{1,2,3\}$, respectively.

We recall that, by definition, a mild solution of (\ref{eq:21}) is an $\tf_t$-adapted random field $\{u(t,x),\, (t,x)\in [0,T]\times \red\}$
such that the following stochastic integral equation is satisfied:
\begin{align}
u(t,x)& = I_0(t,x) + \int_0^t \!\! \int_{\mathbb{R}^{d}} \Gamma(t-s,x-y) \sigma(u(s,y))\, W(ds,dy) \nonumber\\
& \qquad  + \int_0^t \!\! \int_{\mathbb{R}^{d}} b(u(t-s,x-y)) \, \Gamma(s,dy) ds, \qquad \P\text{-a.s.},
\label{eq:22}
\end{align}
for all $(t,x) \in [0,T] \times \red$. Here, $\Gamma$ denotes the fundamental solution
associated to $\cl$ and $I_0(t,x)$ is the contribution of the initial conditions, which we define below. 

The (real-valued) stochastic integral on the right-hand side of (\ref{eq:22}) is understood with 
respect to the {\emph{cylindrical Wiener process}} that can be naturally associated to our spatially homogeneous noise $W$
(see \cite{Nualart-QuerPOTA,Dalang-Quer} and also \cite{Walsh-LNM,Dalang-EJP1999}). In particular, we will assume that Hypothesis \ref{hyp:mu} is satisfied.
Concerning the last integral on the
right-hand side of (\ref{eq:22}), we point out that we use the notation ``$\Gam(s,dy)$'' because we
will assume that $\Gam(s)$ is a measure on $\red$.

As far as the term $I_0(t,x)$ is concerned, if $\cl$ is a parabolic-type operator
and we consider the initial condition (\ref{2.1.1}), then
\beq 
I_0(t,x)= \left( \Gam(t) * u_0\right)(x)=\int_{\red} u_0(x-y)\, \Gam(t,dy). 
\label{2.3} 
\eeq 
On the other hand, in
the case where $\cl$ is second order in time with initial values (\ref{2.1.2}),
\beq
I_0(t,x) = \left( \Gam(t) * v_0\right)(x) + \frac{\partial}{\partial t} \left( \Gam(t) * u_0\right)(x).
\label{2.4}
\eeq

\begin{example}
Owing to the considerations in \cite[Section 3]{Dalang-EJP1999} (see also \cite[Examples 4.2 and 4.3]{Nualart-QuerPOTA}),
in the case of the stochastic heat equation in any space dimension $d\geq 1$ and the stochastic wave equation in
dimensions $d=1,2,3$, the fundamental solutions are well-known and the conditions in Hypothesis \ref{hyp:mu} are 
satisfied if and only if 
\beq
\int_{\red} \frac{1}{1+|\xi|^2}\, \mu(d\xi) < + \infty.
\label{eq:18}
\eeq
%Moreover, for these particular examples, explicit formulas for the term $I_0(t,x)$ are given in \cite[Sec. 4.2]{Dalang-Quer}.
\end{example}

\bigskip

We shall consider the following assumption on the initial conditions. In the case of the stochastic heat equation in any space dimension 
(resp. wave equation with dimension $d=1,2,3$), sufficient conditions on $u_0$ (resp. $u_0$, $v_0$) implying that the hypothesis below
is fulfilled are provided in \cite[Lemma 4.2]{Dalang-Quer}.

\begin{hyp}\label{hyp:ic}
 $(t,x) \mapsto I_0(t,x)$ is continuous and 
$\sup_{(t,x)\in[0,T]\times \red} |I_0(t,x)|<+\infty$.
\end{hyp}
  
\medskip

The following well-posedness result, which is a quotation of \cite[Thm. 4.3]{Dalang-Quer}, is a slight extension of the results 
in \cite{Dalang-EJP1999}.

\begin{thm}\label{existence}
Assume that Hypotheses \ref{hyp:mu} and \ref{hyp:ic} are satisfied and that $\sig$ and $b$ are Lipschitz functions. 
Then there exists a unique solution $\{u(t,x),\, (t,x)\in [0,T]\times \red\}$ of equation (\ref{eq:22}). 
Moreover, for all $p\geq 1$,
\[
\sup_{(t,x)\in [0,T]\times \red} E(|u(t,x)|^p) <+\infty.
\]
\end{thm}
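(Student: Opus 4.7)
The plan is a classical Picard iteration scheme, carried out in $L^p(\Omega)$ uniformly in $(t,x)\in[0,T]\times\red$, following the strategy of \cite[Thm.~4.3]{Dalang-Quer}. First I would set $u^0(t,x):=I_0(t,x)$ and define recursively, for $n\geq 0$,
\[ u^{n+1}(t,x) := I_0(t,x) + \int_0^t\!\!\int_\red \Gam(t-s,x-y)\,\sig(u^n(s,y))\,W(ds,dy) + \int_0^t\!\!\int_\red b(u^n(t-s,x-y))\,\Gam(s,dy)\,ds. \]
Each $u^n$ is $\tf_t$-adapted by construction.

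Step 1 (uniform moment bounds). I would prove by induction on $n$ that, for every $p\geq 2$, there exists $K_p(T)<\infty$ with $\sup_n\sup_{(t,x)\in[0,T]\times\red}\E|u^n(t,x)|^p \leq K_p(T)$. Hypothesis \ref{hyp:ic} handles the contribution of $I_0$. The drift term is bounded via Minkowski's inequality together with $\sup_{t\leq T}\Gam(t,\red)<\infty$ and the linear growth of $b$. For the stochastic convolution, the Burkholder/It\^o-type inequality for integrals against the cylindrical Wiener process associated to $W$ (see \cite{Nualart-QuerPOTA,Dalang-Quer}) yields
\[ \E\Bigl|\int_0^t\!\!\int_\red \Gam(t-s,x-y)\sig(u^n(s,y))\,W(ds,dy)\Bigr|^p \leq C_p\,\E\Bigl(\int_0^t \|\Gam(t-s,x-\star)\sig(u^n(s,\star))\|_\hac^2\,ds\Bigr)^{p/2}. \]
Using that $\Gam(s,\cdot)$ is a non-negative measure and the spectral representation of $\|\cdot\|_\hac$, a standard computation gives
\[ \E\|\Gam(t-s,x-\star)\sig(u^n(s,\star))\|_\hac^2 \leq J(t-s)\sup_{y\in\red}\E|\sig(u^n(s,y))|^2, \quad J(r):=\int_\red|\tf\Gam(r)(\xi)|^2\mu(d\xi). \]
Combined with $J\in L^1([0,T])$ (which follows from $\Phi(T)<\infty$) and the linear growth of $\sig$, a Gronwall-type lemma as in \cite{Dalang-EJP1999} closes the induction.

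Step 2 ($L^p$-convergence, existence and uniqueness). Setting $\delta_n(t):=\sup_{(s,x)\in[0,t]\times\red}\E|u^{n+1}(s,x)-u^n(s,x)|^p$, applying the same bounds with $\sig(u^n)-\sig(u^{n-1})$ and $b(u^n)-b(u^{n-1})$ in place of $\sig$ and $b$, and exploiting their Lipschitz character, yields $\delta_n(t)\leq C\int_0^t \delta_{n-1}(s)\,d\nu(s)$, where $d\nu(s):=(J(T-s)+\Gam(T-s,\red))\,ds$ is a finite measure on $[0,T]$. Iterating this inequality produces $\sum_n \delta_n(T)^{1/p}<\infty$, hence $u^n$ converges in $L^p(\Omega)$, uniformly in $(t,x)$, to a limit $u$; the limit is $\tf_t$-adapted, inherits the moment bound of Step~1, and satisfies (\ref{eq:22}) by passing to the limit in each term via dominated convergence and the isometry. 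Uniqueness follows from the same Gronwall argument applied to the difference of two candidate solutions.

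Main obstacle. The delicate point is the $L^p$-estimate of the stochastic convolution in Step~1 with a random integrand: one must interpret the Walsh/Dalang isometry through the cylindrical Wiener extension of $W$ and then dominate $\|\Gam(t-s,x-\star)Z(s,\star)\|_\hac^2$ by $J(t-s)\sup_y\E|Z(s,y)|^2$, which crucially uses non-negativity of $\Gam(t)$ and the spectral form of the $\hac$-norm from Hypothesis \ref{hyp:mu}. Once this estimate is in hand, everything is funneled through a standard Gronwall lemma, and the passage from $p=2$ to general $p\geq 2$ is routine BDG.
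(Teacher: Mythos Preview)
Your sketch is essentially correct and follows the standard Picard-iteration route. However, there is nothing to compare it against: the paper does not prove Theorem~\ref{existence} at all, but simply quotes it from \cite[Thm.~4.3]{Dalang-Quer} (as announced in the sentence preceding the statement). Your proposal is precisely an outline of that cited argument, so in spirit you and the paper agree---the paper just outsources the proof entirely.

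One small caution on your Step~1: the pointwise-in-$\omega$ bound
\[
\|\Gam(t-s,x-\star)Z(s,\star)\|_\hac^2 \leq J(t-s)\sup_{y}|Z(s,y)|^2
\]
is not literally what one proves (the $\hac$-norm is a spectral object, not a weighted $L^2$). What one actually obtains, after Fubini and using the non-negativity of $\Gam(t)$, is the \emph{expectation} version
\[
\E\|\Gam(t-s,x-\star)Z(s,\star)\|_\hac^2 \leq J(t-s)\sup_{y}\E|Z(s,y)|^2,
\]
which is what you wrote in the displayed line but not quite what your ``Main obstacle'' paragraph suggests. For $p>2$ this is upgraded via H\"older on the inner time integral before BDG; this is the content of \cite[Thm.~2]{Dalang-EJP1999} and \cite[Thm.~4.3]{Dalang-Quer}, and your sketch captures it adequately.
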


\bigskip

Let us now deal with the Malliavin differentiability of the solution $u(t,x)$ of (\ref{eq:22}). 
For this, we consider the Gaussian context described in Section \ref{sec:malliavin}. 
The following proposition summarizes a series of results in \cite{Marquez-Mellouk-SarraSPA2001,Quer-Sanz-Bernoulli,Nualart-QuerPOTA}.
For the statement, we will use the following notation: for any $m\in \mathbb{N}$, set $\bar s:=(s_1,\dots,s_m)\in [0,T]^m$, $\bar z:=(z_1,\dots,z_m)\in (\red)^m$,
$\bar s(i):=(s_1,\dots,s_{i-1},s_{i+1},\dots,s_m)$ (resp. $\bar z(i)$), and, for any function $f$ and variable $X$ for which it makes sense, set
\[
\Del^m(f,X):= D^mf(X)-f'(X)D^mX.
\]
Note that $\Del^m(f,X)=0$ for $m=1$ and, if $m>1$, it only involves iterated Malliavin derivatives up to order $m-1$.

\begin{prop}\label{prop:mal-dif}
Assume that Hypothesis \ref{hyp:mu} is satisfied and, for some $m\in \mathbb{N}\cup \{\infty\}$, $\sigma, b \in \mathcal{C}^m(\R)$ 
and their derivatives of order greater than or equal to one are bounded,
Then, for all $(t,x)\in [0,T]\times \red$, the random variable $u(t,x)$ belongs to $\D^{j,p}$ for any $j=1,\dots,m$ and $p\geq 1$. 
Furthermore, for any $j\in \{1,\dots,m\}$ and $p\geq 1$, the iterated Malliavin derivative $D^ju(t,x)$ satisfies the following 
equation in $L^p(\Om;\hact^{\otimes j})$:
\begin{align} 
& D^ju(t,x) = Z^j(t,x)  \nonumber \\
& \qquad + \int_0^t\int_{\red} \Gam(t-s,x-y) [\Del^j(\sig,u(s,y))+D^j u(s,y) \sig'(u(s,y))] W(ds,dy)  \nonumber \\
& \qquad + \int_0^t\int_{\red} [\Del^j(b,u(t-s,x-y))+D^j u(t-s,x-y) b'(u(t-s,x-y))]\, \Gam(s,dy)ds,
\label{eq:113}
\end{align}
where $Z^j(t,x)$ is the element of $L^p(\Om;\hact^{\otimes j})$ given by
\[
Z^j(t,x)_{\bar s,\bar z}= \sum_{i=1}^j \Gam(t-s_i,x-dz_i) D^{j-1}_{\bar s(i), \bar z(i)} \sig(u(s_i,z_i)).
\]
\end{prop}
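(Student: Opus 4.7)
My plan is to prove the proposition by induction on the order $j$ of differentiation, combined with a Picard iteration scheme for the SPDE. Define the sequence $u^0(t,x):=I_0(t,x)$ and, for $n\geq 0$,
\[
u^{n+1}(t,x)=I_0(t,x)+\int_0^t\!\!\int_{\red}\Gam(t-s,x-y)\sig(u^n(s,y))W(ds,dy)+\int_0^t\!\!\int_{\red}b(u^n(t-s,x-y))\Gam(s,dy)ds.
\]
Under Hypothesis \ref{hyp:mu} and Lipschitz $\sig,b$, Theorem \ref{existence}-type arguments show that $u^n(t,x)\to u(t,x)$ in $L^p(\Om)$ uniformly on $[0,T]\times\red$. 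I will show by induction on $j\leq m$ that each $u^n(t,x)\in\D^{j,p}$ with uniform bounds in $n$, then close the limit by the standard closability argument for $D^j$.

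For the base case $j=1$, I would apply $D$ to the Picard iterate. Using the commutation rule for the Malliavin derivative with the stochastic integral (see \cite{Nualart-llibre}) and the chain rule $D\sig(u^n(s,y))=\sig'(u^n(s,y))Du^n(s,y)$, one obtains an equation of the same shape as (\ref{eq:113}) with $j=1$, where the $Z^1$-term arises as the ``boundary contribution'' $\Gam(t-\cdot,x-d\star)\sig(u^n(\cdot,\star))$ from differentiating the stochastic integral. For the inductive step $j\geq 2$, applying $D$ to the equation for $D^{j-1}u^n(t,x)$ produces $j$ boundary contributions (one for each prior differentiation point) which assemble into $Z^j$, while the chain rule for iterated Malliavin derivatives (a Faà di Bruno expansion) yields exactly the splitting $D^j(\sig\circ u^n)=\sig'(u^n)D^ju^n+\Del^j(\sig,u^n)$, with the remainder depending only on $D^i u^n$ for $i\leq j-1$, already controlled by the induction hypothesis; the analogous expansion holds for $b$.

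The key estimate is obtained by taking the $\hact^{\otimes j}$-norm of both sides, raising to the power $p$, applying Burkholder's inequality to the stochastic integral and H\"older's inequality to the pathwise integral (the latter using $\sup_{t\leq T}\Gam(t,\red)<+\infty$ from Hypothesis \ref{hyp:mu}), and exploiting the Fourier-side isometry and the finiteness of $\Phi(T)$ to dominate terms of the form $\|\Gam(t-\cdot,x-\star)\psi\|^2_{\hact}$ by $\int_0^t\|\tf\Gam(t-s)\|^2_{L^2(\mu)}\sup_y\E|\psi|^2 ds$. A Gronwall argument applied to the function $t\mapsto \sup_{x}\E\|D^j u^n(t,x)\|^p_{\hact_t^{\otimes j}}$ then yields bounds uniform in $n$, so that $u(t,x)\in\D^{j,p}$ and $D^ju(t,x)$ solves (\ref{eq:113}) by passing to the limit.

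The main technical obstacle is the proper handling of the term $Z^j(t,x)$, which is only a measure-valued (distribution-valued when $d\geq 2$ for the wave equation) object in the differentiation variables $\bar z$, because $\Gam(t-s_i,x-dz_i)$ is in general a measure and $\hac$ may contain distributions. The Fourier-analytic reformulation of the $\hact^{\otimes j}$-norm together with Hypothesis \ref{hyp:mu} rescues the situation, since $|\tf\Gam(t)(\xi)|$ is a genuine function and the relevant inner products reduce to integrals against $\mu$. This is precisely the approach carried out in \cite{Nualart-QuerPOTA}, whose estimates can be invoked verbatim here.
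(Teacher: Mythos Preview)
The paper does not give its own proof of this proposition; it explicitly presents it as a summary of results from \cite{Marquez-Mellouk-SarraSPA2001,Quer-Sanz-Bernoulli,Nualart-QuerPOTA}. Your sketch---Picard iteration combined with induction on the order of differentiation, Burkholder/H\"older/Gronwall estimates using the Fourier representation of the $\hact$-norm, and passage to the limit via closability of $D^j$---is precisely the strategy carried out in those references (especially \cite{Nualart-QuerPOTA}), so your proposal is correct and coincides with the approach the paper relies on.
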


A detailed description of the construction of Hilbert-space-valued stochastic integrals as the one in equation (\ref{eq:113}) can be found in 
\cite[Sec. 3]{Nualart-QuerPOTA}. Indeed, as proved in \cite[Sec. 3.6]{Dalang-Quer}, these kind of integrals turn out to be equivalent to 
Hilbert-space-valued stochastic integrals {\emph{\`a la}} Da Prato and Zabczyk \cite{DaPrato-Zabczyk}. 

The above Proposition \ref{prop:mal-dif} can be used to obtain the following results on existence and smoothness of the density 
for the solution $u(t,x)$. They are direct consequences of Theorems 5.2 and 6.2 in \cite{Nualart-QuerPOTA}, with the only difference that 
the latter consider vanishing initial conditions.

\begin{thm}
Assume that Hypotheses \ref{hyp:mu} and \ref{hyp:ic} are satisfied, $b, \sig \in \mathcal{C}^1$ have a bounded derivative, 
and $|\sig(z)|\geq c>0$ for all $z\in \re$. Then, for all $(t,x)\in (0,T]\times \red$, the random variable $u(t,x)$ has a law which 
is absolutely continuous with respect to the Lebesgue measure.
\end{thm}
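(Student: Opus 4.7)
The plan is to invoke the classical Malliavin-calculus criterion for absolute continuity \cite[Prop.~2.1.1]{Nualart-llibre}: if $F\in\D^{1,2}$ and $\|DF\|_{\hact}>0$ almost surely, then the law of $F$ is absolutely continuous with respect to Lebesgue measure. Since $I_0(t,x)$ is deterministic and drops out of the Malliavin derivative, the argument essentially reduces, modulo notational modifications, to the one in \cite[Thm.~5.2]{Nualart-QuerPOTA}. The first step is to note that, under the hypotheses of the theorem, Proposition~\ref{prop:mal-dif} applied with $m=1$ gives $u(t,x)\in\D^{1,p}$ for every $p\geq 1$, together with equation (\ref{eq:113}) specialised to $j=1$ for the Malliavin derivative $Du(t,x)$.

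The main step is to establish $\|Du(t,x)\|_{\hact}>0$ a.s., and I would do this by a short-time analysis. Fix $(t,x)\in(0,T]\times\red$ and $\delta\in(0,t)$. By adaptedness ($D_s u(r,y)=0$ for $r<s$), equation (\ref{eq:113}) with $j=1$ yields, for $s\in(t-\delta,t)$,
\begin{equation*}
D_s u(t,x)(\cdot)=\Gam(t-s,x-\cdot)\,\sig(u(s,\cdot))+R_s(\cdot),
\end{equation*}
where $R_s$ consists of the two integrals in (\ref{eq:113}) restricted to $r\in(s,t)$. The elementary inequality $\|A+B\|_{\hac}^{2}\geq\tfrac{1}{2}\|A\|_{\hac}^{2}-\|B\|_{\hac}^{2}$, integrated over $s\in(t-\delta,t)$, gives
\begin{equation*}
\|Du(t,x)\|_{\hact}^{2}\geq\tfrac{1}{2}J_{\delta}-E_{\delta},
\end{equation*}
with $J_{\delta}:=\int_{t-\delta}^{t}\|\Gam(t-s,x-\star)\sig(u(s,\star))\|_{\hac}^{2}\,ds$ and $E_{\delta}:=\int_{t-\delta}^{t}\|R_{s}\|_{\hac}^{2}\,ds$. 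Exploiting $|\sig|\geq c>0$, the non-negativity of the measure $\Gam(t-s)$, and a mollification argument that regularises $\Gam$ via an approximation of the identity so as to transfer the multiplicative bound through the Fourier-defined $\hac$-norm, one obtains the deterministic lower bound $J_{\delta}\geq c^{2}\Phi(\delta)$, which is strictly positive. On the other hand, the Hilbert-space-valued stochastic-integral isometry (cf.\ \cite[Sec.~3]{Nualart-QuerPOTA}), the boundedness of $\sig'$ and $b'$, and the standard moment estimate $\sup_{(r,y)}\E\|Du(r,y)\|_{\hact}^{2}\leq C\,\Phi(T)$ yield $\E[E_{\delta}]=o(\Phi(\delta))$ as $\delta\to 0^{+}$, since each piece of $R_{s}$ is integrated over the short interval $(s,t)\subset(t-\delta,t)$. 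Chebyshev's inequality then produces $\P(\|Du(t,x)\|_{\hact}^{2}\geq(c^{2}/4)\Phi(\delta))\to 1$ as $\delta\to 0^{+}$, whence $\|Du(t,x)\|_{\hact}>0$ almost surely.

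The main technical obstacle is the lower bound $J_{\delta}\geq c^{2}\Phi(\delta)$: because $\Gam(t-s)$ is only a non-negative measure and the $\hac$-norm is defined via the Fourier transform of the product $\Gam(t-s,x-\cdot)\sig(u(s,\cdot))$, the pointwise inequality $|\sig(u(s,\cdot))|\geq c$ does not translate directly into an $\hac$-norm inequality. I would circumvent this, in the spirit of \cite{Eulalia-Quer-SPA,Nualart-QuerPOTA}, by convolving $\Gam(t-s)$ with an approximation of the identity $\psi_{\ep}$---obtaining a genuine smooth function through which the multiplicative bound does pass---establishing the inequality at the regularised level, and letting $\ep\to 0^{+}$ while invoking the convergence $\|\Gam(t-s)*\psi_{\ep}\|_{\hac}\to\|\Gam(t-s)\|_{\hac}$. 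A secondary point is the estimate $\E[E_{\delta}]=o(\Phi(\delta))$, which requires the two remainder integrals to scale with an extra factor beyond the natural $\Phi$-scale; this is granted by the restriction of the temporal integration domain to the short interval $(s,t)$ whose length vanishes with $\delta$.
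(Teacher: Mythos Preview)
Your approach is essentially the one the paper intends: the paper itself does not give a proof but simply declares the result a direct consequence of \cite[Thm.~5.2]{Nualart-QuerPOTA} (the initial data playing no role since $I_0(t,x)$ is deterministic), and your sketch reconstructs precisely that argument---localize in time, separate the leading term $\Gamma(t-s,x-\star)\sigma(u(s,\star))$ from the remainder, lower-bound the former via non-negativity of $\Gamma$ and mollification, and beat it with a Chebyshev bound on the remainder.

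Two small points are worth tightening. First, the reference \cite[Prop.~2.1.1]{Nualart-llibre} is the integration-by-parts density formula, not the absolute-continuity criterion; the statement ``$F\in\D^{1,2}$ and $\|DF\|_{\hact}>0$ a.s.\ $\Rightarrow$ absolute continuity'' is \cite[Thm.~2.1.3]{Nualart-llibre}. Second, and more substantively, the global bound $\sup_{(r,y)}\E\|Du(r,y)\|_{\hact}^2\le C\,\Phi(T)$ together with the ``short interval'' only yields $\E[E_\delta]\le C\,\Phi(T)\,\Phi(\delta)=O(\Phi(\delta))$, not $o(\Phi(\delta))$: the restriction $r\in(s,t)$ already produces the single factor $\Phi(\delta)$ in the isometry, and there is no second gain from the interval length alone. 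The extra smallness actually comes from replacing the $\hact$-norm of $Du(r,y)$ by its \emph{localized} norm on $\hac_{t-\delta,t}$, for which the paper's Lemma~\ref{lemma:0} (with $j=1$) gives $\sup_{(r,y)}\E\|Du(r,y)\|^2_{\hac_{t-\delta,t}}\le C\,\Phi(\delta)$; this is what delivers $\E[E_\delta]\le C\,\Phi(\delta)\bigl(\Phi(\delta)+\Psi(\delta)\bigr)=o(\Phi(\delta))$, exactly as in the proof of Proposition~\ref{prop:matrix}. With these two adjustments your sketch is complete and matches the cited proof.
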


\begin{thm}
Assume that Hypotheses \ref{hyp:mu} and \ref{hyp:ic} are satisfied, $\sigma, b \in \mathcal{C}^\infty$ 
and their derivatives of order greater than or equal to one are bounded, and that $|\sig(z)|\geq c>0$ for all $z\in \re$.
Moreover, suppose that, for some $\gam>0$, 
\beq
 C\, t^\gam \leq \int_0^t \!\! \int_{\red} |\tf \Gam(s)(\xi)|^2 \, \mu(d\xi)ds, \qquad t\in (0,1).
\label{eq:999}
\eeq
Then, for every $(t,x)\in (0,T]\times \red$, the law of the random variable $u(t,x)$ has a $\cinf$ density.
\end{thm}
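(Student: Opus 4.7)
The strategy is to apply the classical Malliavin calculus criterion for smoothness of densities (see \cite[Theorem 2.1.4]{Nualart-llibre}): it suffices to show that $u(t,x)\in \D^\infty$ and that $u(t,x)$ is non-degenerate in the sense that $\|Du(t,x)\|_{\hact}^{-1}\in \bigcap_{p\geq 1}L^p(\Om)$. The first condition is immediate from Proposition \ref{prop:mal-dif} applied with $m=\infty$: under the assumptions on $b$, $\sig$, together with Hypothesis \ref{hyp:mu}, the random variable $u(t,x)$ belongs to $\D^{j,p}$ for every $j\geq 1$ and $p\geq 1$. All the work is therefore to establish the non-degeneracy, where the lower bound (\ref{eq:999}) and the ellipticity condition $|\sig|\geq c>0$ will be crucial.

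The standard approach is to prove the small-ball estimate $\P(\|Du(t,x)\|_{\hact}^2 < \ep) \leq C_q \ep^q$ for every $q\geq 1$ and $\ep$ small. Writing (\ref{eq:113}) with $j=1$, so that $\Del^1\equiv 0$, one has
\[
D_s u(t,x) = \Gam(t-s,x-\cdot)\,\sig(u(s,\cdot)) + A_s + B_s,
\]
where $A_s$ and $B_s$ are respectively a stochastic and a pathwise integral over $[s,t]$, with integrands involving $Du(r,y)\sig'(u(r,y))$ and $Du(r,y)b'(u(r,y))$. For a small parameter $\del\in(0,t]$ to be chosen, I would bound
\[
\|Du(t,x)\|_{\hact}^2 \geq \int_{t-\del}^t \|D_s u(t,x)\|_\hac^2\,ds,
\]
and split the integrand into the leading Gaussian term and the remainder $A_s+B_s$. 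By mean-square continuity of $s\mapsto \sig(u(s,\cdot))$ at $s=t$ combined with $|\sig|\geq c$, the leading term will contribute at least $\tfrac{c^2}{2}\Phi(\del)$ outside an event whose probability can be made small via $L^p$-moments of $u$ and Hypothesis \ref{hyp:ic}.

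For the remainder, $L^p$-estimates for Hilbert-space-valued stochastic integrals (\cite[Sec.~3]{Nualart-QuerPOTA}), combined with the boundedness of $\sig',b'$ and a Gronwall argument on the short window $[t-\del,t]$, should yield, for every $p\geq 1$,
\[
\E\left(\int_{t-\del}^t \|A_s+B_s\|_\hac^2\,ds\right)^p \leq C_p\,\Phi(\del)^{p(1+\eta)},
\]
for some $\eta>0$; the extra factor $\Phi(\del)^\eta$ reflects integration over a window of length $\del$ together with the lower bound (\ref{eq:999}). Chebyshev's inequality then gives $\P(\|Du(t,x)\|_{\hact}^2 < \tfrac{c^2}{4}\Phi(\del)) \leq C_p\,\Phi(\del)^{p\eta}$, and using (\ref{eq:999}) to write $\Phi(\del)\geq C\del^\gam$ and choosing $\del$ so that $\Phi(\del)\asymp \ep$, the arbitrariness of $p$ produces the required small-ball estimate, hence $\|Du(t,x)\|^{-1}\in\bigcap_q L^q(\Om)$.

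The main technical obstacle is precisely this $L^p$-estimate of the remainder: one must iterate equation (\ref{eq:113}) to replace the unknown $Du(r,y)$ by its explicit leading term, and then apply Burkholder-type inequalities in the Hilbert-space setting in a way that extracts the gain $\Phi(\del)^\eta$ from the short integration window. This is exactly the type of computation developed in \cite[Thm.~6.2]{Nualart-QuerPOTA} for vanishing initial data, and adapting it here under Hypothesis \ref{hyp:ic} is what I would take to be the technical core of the proof.
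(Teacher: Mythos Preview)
Your outline is essentially the one the paper adopts: the theorem is stated there as a direct consequence of \cite[Thm.~6.2]{Nualart-QuerPOTA}, and the same small-ball argument is carried out in detail in the proof of Proposition~\ref{prop:matrix}. The reduction to $u(t,x)\in\D^\infty$ plus non-degeneracy, the localization to $[t-\del,t]$, the Chebyshev step on the remainder, and the choice of $\del=\del(\ep)$ via $\Phi(\del)\asymp\ep$ together with (\ref{eq:999}) are exactly what is done there.

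One step in your sketch is not right, however, and it matters because the alternative you suggest would not work in this generality. The lower bound on the leading term
\[
\int_{t-\del}^t \big\|\sig(u(s,\star))\,\Gam(t-s,x-\star)\big\|_{\hac}^2\,ds
\]
does \emph{not} come from any mean-square continuity of $s\mapsto\sig(u(s,\cdot))$. Such an argument would require comparing $\sig(u(s,y))$ to a scalar (say $\sig(u(t,x))$) uniformly in $y$ in the support of $\Gam(t-s,x-\cdot)$, i.e.\ spatial regularity of $u$, which is not available here (and for the wave equation in $d=3$, $\Gam(t)$ is a surface measure, making this hopeless). The correct argument is purely structural and gives a \emph{deterministic} a.s.\ bound: since $\sig$ is continuous with $|\sig|\geq c>0$ it has constant sign, $\Gam(s)$ is a non-negative measure by Hypothesis~\ref{hyp:mu}, and $\Lam$ is a non-negative measure; hence $\sig(u(s,y))\sig(u(s,z))\geq c^2$ and
\[
\big\|\sig(u(s,\star))\Gam(t-s,x-\star)\big\|_{\hac}^2
=\int_{\red}\!\!\int_{\red}\sig(u(s,y))\sig(u(s,z))\,\Gam(t-s,x-dy)\Gam(t-s,x-dz)\,\Lam(d(y-z))
\geq c^2\|\Gam(t-s,x-\star)\|_{\hac}^2,
\]
so the leading term is $\geq c^2\Phi(\del)$ almost surely, with no exceptional event. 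With this correction (and noting that the remainder bound is $\E|I(t,x;\del)|^p\leq C\Phi(\del)^p(\Phi(\del)^p+\Psi(\del)^p)$ with $\Psi(\del)\leq C\del\leq C\Phi(\del)^{1/\gam}$, which is your $\eta=\min(1,1/\gam)$), your argument goes through exactly as in the paper.
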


As commented in the Introduction, both results apply to the stochastic heat equation with $d\geq 1$ and the stochastic wave equation with $d\in \{1,2,3\}$ 
provided that (\ref{eq:18}) is satisfied, since condition (\ref{eq:999}) holds for these examples with $\gam=1$ and $\gam=3$, respectively.

%%%%%%%%%%%%%%%%%%%%%%%%%%%%%%%%%%%%%%%%%%%%%%%%%%%%%%%
\section{Auxiliary results}
\label{sec:aux}

This section is devoted to prove the main two ingredients needed in the proof of Theorem \ref{thm:main}.
The first one establishes a suitable uniform bound for the norm of the iterated Malliavin derivative of the 
solution $u(t,x)$ in small time intervals. The second one deals with the negative moments of the corresponding 
Malliavin matrix, which here simply reduces to the norm of the Malliavin derivative of $u(t,x)$.

\begin{lemma} \label{lemma:0}
Let $0\leq a<e\leq T$ and $p\geq 1$. Assume that Hypotheses \ref{hyp:mu} and \ref{hyp:ic} are satisfied and that, for some $m\in \N$, the coefficients 
$b,\sig$ belong to $\mathcal{C}^m$ and all their derivatives of order greater than or equal to one are bounded. 
Then, there exists a positive constant $C$, which is independent of $a$ and $e$, such that, for all $\del\in (0,e-a]$:
\beq
\sup_{(\tau,y) \in [e-\del,e]\times \red} \E \left( \Vert D^j u(\tau,y) \Vert^{2p}_{\mathcal{H}^{\otimes j}_{e-\del,e}} \right) \leq C
\, \Phi(\del)^{jp},
\label{eq:46}
\eeq
for all $j\in \{1,\dots,m\}$, where we remind that, for all $t\geq 0$,
\[
\Phi(t)= \int_0^t \!\! \int_{\red} |\tf \Gam(s)(\xi)|^2\, \mu(d\xi) ds.
\]
\end{lemma}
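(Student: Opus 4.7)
The plan is to proceed by induction on $j \in \{1,\ldots,m\}$ starting from equation (\ref{eq:113}) for $D^j u(\tau,y)$, closing each induction step by a Gronwall argument in the time variable $\tau$. The target exponent $jp$ on $\Phi(\del)$ will emerge because, in the inductive step, the term $Z^j(\tau,y)$ contributes one fresh factor of $\Phi(\del)$ (from the $\Gam$-factor it contains) on top of the factor $\Phi(\del)^{(j-1)p}$ inherited from the $(j-1)$-th Malliavin derivative of $\sig(u)$ that also appears inside it.

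The first obstacle is that $\Gam(t)$ may be a genuine distribution (for example, in the wave equation with $d=3$), so the $\hac^{\otimes j}_{e-\del,e}$-norm of $Z^j$ is not directly accessible. Following the strategy sketched in the Introduction, I would first mollify $\Gam$: let $\{\psi_n\}\subset \cinf_0(\red)$ be a standard approximation of the identity and set $\Gam_n(t,\cdot):=(\Gam(t)\ast \psi_n)(\cdot)$, which is a smooth non-negative function. Replacing $\Gam$ by $\Gam_n$ in (\ref{eq:22}) produces a regularized equation whose solution $u_n$ and iterated Malliavin derivatives $D^j u_n$ converge, in $L^p(\Om;\hac_{e-\del,e}^{\otimes j})$, to their non-regularized counterparts; this convergence is guaranteed by Hypothesis \ref{hyp:mu} together with the moment bounds of Theorem \ref{existence}, since $\int_0^T\!\!\int_{\red}|\tf\Gam(s)(\xi)|^2|1-\tf\psi_n(\xi)|^2\mu(d\xi)ds\to 0$ by dominated convergence. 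Consequently, it suffices to prove (\ref{eq:46}) for $\Gam_n$ with a constant $C$ independent of $n$.

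For smooth $\Gam_n$, the argument mimics that of \cite[Lem.\ 3.4]{Eulalia-Quer-SPA}, originally carried out for the heat kernel. In the base case $j=1$, applying the Burkholder--Davis--Gundy inequality and the Dalang--Walsh isometry in (\ref{eq:113}) leads to
\[
\phi(\tau):=\sup_{y\in\red}\E\bigl(\|Du_n(\tau,y)\|^{2p}_{\hac_{e-\del,e}}\bigr) \leq C\,\Phi(\del)^p + C\!\int_{e-\del}^{\tau}\!\!\Psi_n(\tau-s)\,\phi(s)\,ds,
\]
where $\Psi_n(r):=\int_{\red}|\tf\Gam_n(r)(\xi)|^2\mu(d\xi)$ satisfies $\int_0^{\del}\Psi_n(r)dr\leq \Phi(\del)$ uniformly in $n$, so Gronwall's lemma closes the bound. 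In the inductive step, the $\hac^{\otimes j}_{e-\del,e}$-norm of $Z^j$ is bounded by factorizing each summand into a tensor product of a single $\Gam_n$-factor (contributing $\Phi(\del)^p$) and a $D^{j-1}\sig(u_n)$-factor, to which the induction hypothesis applies; the $\Del^j$-corrections and the pathwise integral (whose kernel is the finite measure $\Gam_n(s,dy)ds$, uniformly bounded by Hypothesis \ref{hyp:mu}) involve only derivatives of order $\leq j-1$ and are controlled analogously. The most delicate step is precisely this control of $\|Z^j\|_{\hac^{\otimes j}_{e-\del,e}}$, where one must carefully disentangle the tensor product structure to separate the $\Gam_n$-factor from the Malliavin-derivative factor before invoking the induction hypothesis: it is here that the smoothness of $\Gam_n$ is crucial and where the mollification argument repays its cost.
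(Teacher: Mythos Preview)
Your proposal is correct and follows essentially the same approach as the paper: mollify $\Gam$ by convolution with an approximate identity $\psi_n$, prove the estimate for the regularized solution $u_n$ by the inductive/Gronwall argument of \cite[Lem.~3.4]{Eulalia-Quer-SPA}, and pass to the limit via convergence of $D^j u_n$ to $D^j u$ in $L^2(\Om;\hac^{\otimes j}_{e-\del,e})$. Two minor points of divergence: the paper's regularized equation replaces $\Gam$ by $\Gam_n$ only in the stochastic integral, keeping the original measure $\Gam(s,dy)$ in the pathwise term (this is harmless either way since $\Gam_n(t,\red)=\Gam(t,\red)$); and for the convergence $D^j u_n\to D^j u$ the paper invokes the proofs of \cite[Thm.~1]{Quer-Sanz-Bernoulli} and \cite[Prop.~6.1]{Nualart-QuerPOTA} rather than arguing directly from dominated convergence on the Fourier side---your heuristic is the right one, but the full justification (propagating the kernel convergence through the nonlinear fixed-point and its iterated derivatives) is precisely what those references supply.
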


\begin{proof}
It is similar to that of \cite[Lem. 3.4]{Eulalia-Quer-SPA}, where a conditioned version of this result for the stochastic heat equation 
has been proved. Precisely, as already pointed out in \cite[Rmk. 3.5]{Eulalia-Quer-SPA}, in our general setting we need to smooth the fundamental solution 
$\Gam$ as follows. Let $\psi\in \mathcal{C}_0^{\infty}(\mathbb{R}^d)$ be such that $\psi\geq 0$,
its support is contained in the unit ball of $\mathbb{R}^d$ and $\int_{\mathbb{R}^d} \psi(x)dx=1$.
For $n\in \N$, set $\psi_n(x):=n^d \psi(nx)$ and, for all $t$, $\Gamma_n(t):=\psi_n \ast \Gamma(t)$. 
It is well-known that $\Gamma_n(t)$ belongs to $\mathcal{S}(\mathbb{R}^d)$.

Let us now consider $\{u_n(t,x),\, (t,x)\in [0,T]\times \red\}$ the unique solution of 
\begin{align*}
u_n(t,x)& = I_0(t,x) + \int_0^t \!\! \int_{\mathbb{R}^{d}} \Gamma_n (t-s,x-y) \sigma(u_n(s,y))\, W(ds,dy) \\
& \qquad  + \int_0^t \!\! \int_{\mathbb{R}^{d}} b(u_n(t-s,x-y)) \, \Gamma(s,dy) ds. 
\end{align*}
Since $\Gam_n(t)$ is a smooth function (such as in the case of the heat equation), we can mimic the proof of 
\cite[Lem. 3.4]{Eulalia-Quer-SPA}, so that we end up with estimate (\ref{eq:46}) with $u$ replaced by $u_n$. 
Indeed, we should remark here that the term involving the pathwise integral with respect to     
$\Gamma(s,dy) ds$ does not cause any problem since we only need to use that $\Gam(t,\red)$ is 
uniformly bounded in $t$, which is part of Hypothesis \ref{hyp:mu}.

On the other hand, a direct consequence of the proofs of \cite[Thm. 1]{Quer-Sanz-Bernoulli} and \cite[Prop. 6.1]{Nualart-QuerPOTA} is 
that, for all $(t,x)\in [e-\del,e]\times \red$ and $j\in \{1,\dots,m\}$, 
\[
 D^j u(t,x) =L^2(\Om; \hac^{\otimes j}_{e-\del,e})-\lim_{n\rightarrow \infty} D^j u_n(t,x).
\]
Therefore, writing down the corresponding convergence of norms and taking supremum over $[e-\del,e]\times\red$, we conclude the proof.
\end{proof}

\begin{prop}\label{prop:matrix}
Assume that Hypotheses \ref{hyp:mu} and \ref{hyp:ic} are satisfied, that $b, \sigma$ are $\mathcal{C}^1$ functions with bounded derivatives and 
that $|\sig(z)|\geq c >0$ for all $z\in \re$. Moreover, suppose that, for some $\gam>0$, 
\beq
 C\, t^\gam \leq \int_0^t \!\! \int_{\red} |\tf \Gam(s)(\xi)|^2 \, \mu(d\xi)ds, \qquad t\in (0,1).
\label{eq:9999}
\eeq
Then, for any $p>0$, 
there exists a constant $C>0$ such that, for all $(t,x)\in (0,T]\times \red$,
\[
 \E \left( \|D u(t,x)\|^{-2p}_{\hact}\right) \leq C\, \Phi(t)^{-p}.
\]
\end{prop}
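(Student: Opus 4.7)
The approach follows the Bally--Pardoux small-time localization strategy, as adapted to the spatially homogeneous setting in \cite[Thm.~6.2]{Nualart-QuerPOTA}. The idea is to restrict $Du(t,x)$ to a short time window $[t-\delta,t]$, isolate a principal term whose $\hac$-norm is bounded below by a multiple of $\Phi(\delta)$, dominate the remainder, and convert the resulting tail estimate into the claimed negative-moment bound.

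First I would use the trivial bound $\|Du(t,x)\|_\hact^2\geq\int_{t-\delta}^t\|D_{s,\cdot}u(t,x)\|_\hac^2\,ds$ for $\delta\in(0,t]$. Proposition \ref{prop:mal-dif} with $j=1$ gives, for $s\in[t-\delta,t]$, the decomposition
\[
D_{s,\cdot}u(t,x)=\Gam(t-s,x-\cdot)\,\sig(u(s,\cdot))+R(t,x,s),
\]
where $R(t,x,s)$ collects the stochastic-integral (in $\sig'$) and pathwise-integral (in $b'$) contributions over $[s,t]$. Using $(A+B)^2\geq \tfrac12 A^2-B^2$ and integrating,
\[
\|Du(t,x)\|_\hact^2\geq\tfrac12\,I_1(\delta)-I_2(\delta),
\]
where $I_1(\delta):=\int_{t-\delta}^t\|\Gam(t-s,x-\cdot)\sig(u(s,\cdot))\|_\hac^2\,ds$ and $I_2(\delta):=\int_{t-\delta}^t\|R(t,x,s)\|_\hac^2\,ds$.

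To lower bound $I_1(\delta)$, observe that $\sig\in\mathcal{C}^1$ with $|\sig|\geq c>0$ forces $\sig$ to have constant sign. Since the law of $\|Du(t,x)\|_\hac$ is invariant under the substitution $W\mapsto -W$ (and hence under $\sig\mapsto -\sig$), we may assume $\sig\geq c$, so that $\sig(u(s,\cdot))\geq c$ almost surely. Smoothing $\Gam$ as in the proof of Lemma \ref{lemma:0}, $\Gam_n=\psi_n\ast\Gam\geq 0$, and exploiting the physical-side representation $\|\phi\|_\hac^2=\int(\phi\ast\tilde\phi)(y)\,\Lam(dy)$ (valid because $\Lam$ is a non-negative measure), the pointwise bound $\Gam_n(t-s,x-\cdot)\sig(u_n(s,\cdot))\geq c\,\Gam_n(t-s,x-\cdot)\geq 0$ yields
\[
\|\Gam_n(t-s,x-\cdot)\sig(u_n(s,\cdot))\|_\hac^2\geq c^2\|\Gam_n(t-s)\|_\hac^2.
\]
Integrating in $s$ and passing to the limit $n\to\infty$ (via $\tf\Gam_n=\tf\psi_n\,\tf\Gam\to\tf\Gam$ and Fatou) produces $I_1(\delta)\geq c^2\Phi(\delta)$.

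\emph{The main obstacle} is the upper bound on $I_2(\delta)$. Specifically, I would aim at
\[
\E\bigl[I_2(\delta)^p\bigr]\leq C_p\,\Phi(\delta)^{p(1+\theta)},\qquad p\geq 1,
\]
for some $\theta>0$, so that the remainder is of strictly higher order than the principal term. This requires the Hilbert-valued Burkholder--Davis--Gundy inequality for the stochastic part of $R$, the uniform bound $\sup_{s\leq T}\Gam(s,\red)<\infty$ from Hypothesis \ref{hyp:mu} for the pathwise part, boundedness of $\sig'$ and $b'$, and crucially the small-interval estimate of Lemma \ref{lemma:0} giving $\E\|Du(r,y)\|_{\mathcal{H}_{t-\delta,t}}^{2p}=O(\Phi(\delta)^p)$ uniformly in $(r,y)\in[t-\delta,t]\times\red$. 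The extra factor $\Phi(\delta)^\theta$ must come from an additional time integration combined with the decay of $\tf\Gam$ encoded in Hypothesis \ref{hyp:mu}; extracting it is the delicate step and is the technical heart of the argument.

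Once both bounds are in hand, Markov's inequality yields, for every $p\geq 1$,
\[
\P\bigl(\|Du(t,x)\|_\hact^2<\tfrac{c^2}{4}\Phi(\delta)\bigr)\leq\P\bigl(I_2(\delta)>\tfrac{c^2}{4}\Phi(\delta)\bigr)\leq C_p\,\Phi(\delta)^{p\theta}.
\]
Since $\Phi$ is continuous, non-decreasing and $\Phi(0)=0$, for $\epsilon\in(0,c^2\Phi(t)/4]$ we may choose $\delta=\delta(\epsilon)\in(0,t]$ with $\Phi(\delta)=4\epsilon/c^2$, giving $\P(\|Du(t,x)\|_\hact^2<\epsilon)\leq C_p\,\epsilon^{p\theta}$. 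Splitting
\[
\E\bigl[\|Du(t,x)\|_\hact^{-2q}\bigr]=q\int_0^\infty\epsilon^{-q-1}\P\bigl(\|Du(t,x)\|_\hact^2<\epsilon\bigr)\,d\epsilon
\]
at $\epsilon=c^2\Phi(t)/4$, the outer range contributes $O(\Phi(t)^{-q})$ from the trivial bound $\P\leq 1$, while the inner range, upon choosing $p$ with $p\theta>q$, contributes $O(\Phi(t)^{p\theta-q})=o(\Phi(t)^{-q})$. Hence $\E[\|Du(t,x)\|_\hact^{-2q}]\leq C\,\Phi(t)^{-q}$, as claimed. The hypothesis (\ref{eq:9999}) is used precisely to ensure that $\Phi(\delta)$ is genuinely comparable to a positive power of $\delta$, so that $\delta(\epsilon)$ stays bounded in $(0,t]$ as $\epsilon\to 0^+$.
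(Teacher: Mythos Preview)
Your skeleton is the paper's: localize to $[t-\delta,t]$, split the Malliavin derivative into the principal term $\Gamma(t-\cdot,x-\star)\sigma(u(\cdot,\star))$ and a remainder, bound the principal term below by $c^2\Phi(\delta)$, and control the remainder in $L^p$ via Lemma~\ref{lemma:0}. The tail-to-moment conversion is also essentially the same (the paper normalizes by $\Phi(t)$ and invokes \cite[Lem.~2.3.1]{Nualart-llibre}, but your layer-cake computation is equivalent).

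However, you have misidentified the source of the ``extra gain'' and, with it, the role of hypothesis~(\ref{eq:9999}). The paper does \emph{not} obtain $\E[I_2(\delta)^p]\le C\,\Phi(\delta)^{p(1+\theta)}$ from Hypothesis~\ref{hyp:mu} alone. What it actually proves is
\[
\E\bigl(|I(t,x;\delta)|^p\bigr)\le C\,\Phi(\delta)^p\bigl(\Phi(\delta)^p+\Psi(\delta)^p\bigr),\qquad \Psi(\delta):=\int_0^\delta \Gamma(r,\red)\,dr.
\]
The stochastic remainder $R_1$ contributes the extra $\Phi(\delta)^p$ (this is where BDG and Lemma~\ref{lemma:0} combine), but the pathwise remainder $R_2$ contributes only $\Psi(\delta)^p$, which by Hypothesis~\ref{hyp:mu} is bounded by $C\delta^p$---not by any power of $\Phi(\delta)$. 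There is no ``decay of $\tf\Gamma$'' argument that turns $\Psi(\delta)$ into $\Phi(\delta)^\theta$; that conversion is precisely where (\ref{eq:9999}) enters: from $C\delta^\gamma\le\Phi(\delta)$ one gets $\delta\le C\,\Phi(\delta)^{1/\gamma}$, hence $\Psi(\delta)\le C\,\Phi(\delta)^{1/\gamma}$, and after the choice $\Phi(\delta)\sim\epsilon\,\Phi(t)$ this yields $\Psi(\delta)\le C\,\epsilon^{1/\gamma}$. Your final sentence---that (\ref{eq:9999}) is used ``to ensure $\delta(\epsilon)$ stays bounded in $(0,t]$''---is wrong: that boundedness follows from continuity and monotonicity of $\Phi$ alone, as you yourself argued one line earlier. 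Once you relocate (\ref{eq:9999}) to its correct job (controlling the $b'$-remainder), the proof closes exactly as in the paper, with exponent $\theta=\min(1,1/\gamma)$.
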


\begin{proof}
The proof's structure is analogous as that of the proofs of \cite[Thm. 6.2]{Nualart-QuerPOTA} and \cite[Prop. 4.3]{Eulalia-Quer-SPA}, 
so we will only sketch the main steps.

First, owing to \cite[Lem. 2.3.1]{Nualart-llibre}, it suffices to check that, for any $q>2$, 
there exists $\ep_0=\ep_0(q)>0$ such that, for all $\ep \leq \ep_0$,
\beq
\P \left\{ \Phi(t)^{-1} \| D u(t,x) \|^2_{\hact} < \ep \right\} \leq C \ep^q. 
\label{eq:449}
\eeq
Note that the Malliavin derivative $Du(t,x)$ verifies the following equation in $\hact$ (take $m=1$ in (\ref{eq:113})):
\begin{align*}
D u(t,x) & = \sigma(u(\cdot,\star))\Gamma(t-\cdot,x-\star) \\
& \qquad + \int_0^t \!\! \int_{\mathbb{R}^{d}} \Gamma(t-s,x-y) \sigma'(u(s,y)) D u(s,y)W(ds,dy) \\
& \qquad + \int_0^t \!\! \int_{\mathbb{R}^{d}}  b'(u(s,x-y)) D u(s,x-y)\Gamma(t-s,dy) ds.
\end{align*}
Then, for any small $\del>0$ one proves that
\begin{align}
\P \left\{ \Phi(t)^{-1} \| D u(t,x) \|^2_{\hact} < \ep \right\} & \leq 
\P \left\{ \Phi(t)^{-1} I(t,x;\del) \geq c\, \Phi(t)^{-1} \Phi(\del)- \ep  \right\} \nonumber \\
& \leq \left( c \, \Phi(t)^{-1} \Phi(\del)- \ep\right) ^{-p}  \Phi(t)^{-p} \, \E (|I(t,x;\del)|^p),  
\label{eq:47}
\end{align}
where $I(t,x;\del):= \|R_1(t,x;\del)\|^2_{\hac_{t-\del,t}} + \|R_2(t,x;\del)\|^2_{\hac_{t-\del,t}}$ and 
\[
 R_1(t,x;\del):=\int_\cdot^t \!\!\int_{\red} \Gam(t-s,x-y)\sig'(u(s,y)) D u(s,y) W(ds,dy),
\]
\[
 R_2(t,x;\del):=\int_\cdot^t \!\!\int_{\red} b'(u(t-s,x-y)) D u(t-s,x-y) \, \Gam(s,dy)ds.
\]
Using the above Lemma \ref{lemma:0} and applying standard integral estimates, one checks that
\[
 \E (|I(t,x;\del)|^p) \leq C \, \Phi(\del)^p \left( \Phi(\del)^p + \Psi(\del)^p\right),
\]
where we have set 
\[
\Psi(s):= \int_0^s \Gam(r,\red)dr. 
\]
Thus, going back to (\ref{eq:47}), we obtain 
\[
 \P \left\{ \Phi(t)^{-1} \| D u(t,x) \|^2_{\hact} < \ep \right\} 
\leq C \left( c \, \Phi(t)^{-1} \Phi(\del)- \ep\right) ^{-p}  \Phi(t)^{-p} \Phi(\del)^p \left( \Phi(\del)^p + \Psi(\del)^p\right).
\]
At this point, taking a small enough $\ep_0$ if necessary, we can choose $\del=\del(\ep)$ such that 
\beq
 \frac{c}{2} \, \Phi(t)^{-1} \Phi(\del) = \ep.
\label{eq:48}
\eeq
Hence, we have
\[
 \P \left\{ \Phi(t)^{-1} \| D u(t,x) \|^2_{\hact} < \ep \right\} 
\leq C \left( \Phi(\del)^p + \Psi(\del)^p\right).
\]
Note, on the one hand, that condition (\ref{eq:48}) implies $\Phi(\del)\leq C \Phi(T) \ep \leq C \ep$.
On the other hand, by Hypothesis \ref{hyp:mu} we have $\Psi(\del)\leq C \del$. Hence, the assumption (\ref{eq:9999}) and what we have just said above 
let us infer that $\Psi(\del)\leq C \ep^{\frac1\gam}$. Therefore,
\[
 \P \left\{ \Phi(t)^{-1} \| D u(t,x) \|^2_{\hact} < \ep \right\} 
\leq C \left(\ep^p + \ep^{\frac{p}{\gam}}\right),
\]
so taking $p=q(\gamma \vee 1)$ we conclude that (\ref{eq:449}) is satisfied.
\end{proof}

%%%%%%%%%%%%%%%%%%%%%%%%%%%%%%%%%%%%%%%%%%%%%%%%%%%%%%%%%%%%%%%%%%%%%%%%%%%%%%%%%%%%%%%%%%%%%%%%
\section{Proof of the main result}
\label{sec:upper-bound}

In this section, we are going to prove Theorem \ref{thm:main}. On the one hand, we note that Proposition \ref{prop:mal-dif} implies 
that, for any $(t,x)\in (0,T]\times \red$, the random variable $u(t,x)$ belongs to $\mathbb{D}^{2,p}$ for all $p\geq 1$. 
Moreover, an immediate consequence of Proposition \ref{prop:matrix} is that the Malliavin matrix associated to $u(t,x)$ has 
negative moments of all orders. Thus, applying a general criterion of the Malliavin calculus (see e.g. \cite[Prop. 2.1.5]{Nualart-llibre} or
\cite[Thm. 4.1]{Malliavin}), we obtain that the law of $u(t,x)$ has a density and it is a continuous function.   

On the other hand, as explained in the Introduction, the proof of (\ref{eq:gb}) is a matter 
of following exactly the same arguments as in \cite[Sec. 5]{Eulalia-Quer-SPA} and invoking the results of the previous section.
Let us sketch the main steps to follow.

To start with, we use the formula for the density arising from the application of the integration-by-parts formula in the Malliavin calculus context
(see e.g. \cite[Prop. 2.1.1]{Nualart-llibre}). Precisely, denoting the density of $u(t,x)$ by $p_{t,x}$, we have
\[
p_{t,x}(y)=E \left( {\bf{1}}_{\{ u(t,x)>y \}} \del \left( \frac{Du(t,x)}{\|Du(t,x)\|^2_{\hact}}\right)\right), \quad y\in \re,
\]
where here $\del$ denotes the divergence operator or Skorohod integral, that is the adjoint of the Malliavin derivative operator (see \cite[Ch. 1]{Nualart-llibre}).
  
Next, taking into account the equation satisfied by $u(t,x)$ (i.e. (\ref{eq:22})) and applying \cite[Prop. 2.1.2]{Nualart-llibre}, 
we obtain
\begin{align}
p_{t,x}(y) & \leq C\;  \P \left\{|M_t| > |y-I_0(t,x)|-c_3T \right\}^{\frac1q} \nonumber \\
& \qquad \quad \times \left\{ \E \left( \Vert D u(t,x) \Vert_{\mathcal{H}_T}^{-1}\right) +  
\left( \E \Vert D^2 u(t,x)\Vert_{\mathcal{H}_T^{\otimes 2}}^\alpha \right)^{\frac1\al}  
\left( \E \Vert D u(t,x) \Vert^{-2\beta}_{\mathcal{H}_T} \right)^{\frac1\beta} \right\},
\label{eq:223}
\end{align}
where $\alpha, \beta, q$ are any positive real numbers satisfying $\frac{1}{\alpha}+\frac{1}{\beta}+\frac{1}{q}=1$.
In the above expression, $M_t$ denotes the {\emph{martingale part}} of the solution $u(t,x)$, that is 
\[
M_t=\int_0^t \!\! \int_{\red} \Gam(t-s,x-y) \sigma(u(s,y)) W(ds,dy), 
\]
and the term $c_3 T$ comes from the fact that, due to Hypothesis \ref{hyp:mu} and the boundedness of $b$, for all $(t,x)\in (0,T]\times \red$,
\[
 \left| \int_0^t \!\! \int_{\red} \Gam(t-s,x-y) b(u(s,y)) \, dy ds \right| \leq c_3 \, T, \qquad \P\text{-a.s.}
\]

In order to estimate the terms in (\ref{eq:223}), we first apply the exponential martingale inequality in order to get 
a suitable exponential bound of the probability in (\ref{eq:223}) (using that $\langle M \rangle_t \leq C\, \Phi(t)$), and then we conveniently apply 
Lemma \ref{lemma:0} and Proposition \ref{prop:matrix}. Thus
\[
p_{t,x}(y) \leq c_1\, \Phi(t)^{-1/2} \exp \biggl( -\frac{(|y-I_0(t,x) \vert-c_3 T)^2}{c_2 \Phi(t)}\biggr), \quad y\in \re,
\]
where the constants $c_1, c_2, c_3$ do not depend on $(t,x)$, so we conclude the proof of Theorem \ref{thm:main}. 
\qed

%%%%%%%%%%%%%%%%%%%%%%%%%%%%%%%%%%%%%%%%%%%%%%%%%%%%%%%%%%%%%%%%%%%%%%
%%%%%%%%%%%%%%%%%%%%%%%%%%%%%%%%%%%%%%%%%%%%%%%%%%%%%%%%%%%%%%%%%%%%%%

%\bibliographystyle{amsplain}
%\bibliography{../Bibfile/lluis}

\def\cprime{$'$} \def\cprime{$'$} \def\cprime{$'$} \def\cprime{$'$}
  \def\polhk#1{\setbox0=\hbox{#1}{\ooalign{\hidewidth
  \lower1.5ex\hbox{`}\hidewidth\crcr\unhbox0}}}
\providecommand{\bysame}{\leavevmode\hbox to3em{\hrulefill}\thinspace}
\providecommand{\MR}{\relax\ifhmode\unskip\space\fi MR }
% \MRhref is called by the amsart/book/proc definition of \MR.
\providecommand{\MRhref}[2]{%
  \href{http://www.ams.org/mathscinet-getitem?mr=#1}{#2}
}
\providecommand{\href}[2]{#2}

\end{document}